\titleformat{\section}{\centering\large\bfseries}{\S\arabic{section}}{1em}{}
\newtheorem{theorem}{Theorem}[section]
\newtheorem{lemma}{Lemma}[section]
\newtheorem{rem}{Remark}[section]
\newtheorem{definition}{Definition}[section]
\begin{document}
\setlength\abovedisplayskip{2pt}
\setlength\abovedisplayshortskip{0pt}
\setlength\belowdisplayskip{2pt}
\setlength\belowdisplayshortskip{0pt}

\begin{center}
{\bf \LARGE A Generalization of Exponential Class and Its
Applications\footnote{Corresponding author: GAO Hongya, E-mail:
ghy@hbu.cn, TEL: 863125079658, FAX: 863125079638.}}

\vspace{3mm}

{\small \textsc{GAO Hongya}\quad
\textsc{LIU Chao} \quad  \textsc{TIAN Hong}\\}

{\small  College of Mathematics and Computer Science, Hebei
University, Baoding, 071002, China\\}
\end{center}

\begin{center}
\begin{minipage}{135mm}
{\bf \small Abstract}.\hskip 2mm {\small A function space,
$L^{\theta, \infty)}(\Omega)$, $0\le \theta <\infty$, is defined. It
is proved that $L^{\theta, \infty)}(\Omega)$ is a Banach space which
is a generalization of exponential class. An alternative definition
of $L^{\theta, \infty)}(\Omega)$ space is given. As an application,
we obtain weak monotonicity property for very weak solutions of
$\cal A$-harmonic equation  with variable coefficients under some
suitable conditions related to $L^{\theta, \infty)}(\Omega)$, which
provides a generalization of a known result due to Moscariello. A
weighted space $L^{\theta,\infty )}_w (\Omega)$ is also defined, and
the boundedness for the Hardy-Littlewood maximal operator $M_w$ and
a Calder\'on-Zygmund operator $T$ with respect to $L^{\theta,\infty
)}_w (\Omega)$ are obtained.}

{\bf AMS Subject Classification: } 46E30, 35J70.

{\bf Keywords:} Exponential class, weak monotonicity, very weak
solution, $\cal A$-harmonic equation, Hardy-Littlewood maximal
operator, Calder\'on-Zygmund operator.
\end{minipage}
\end{center}

\thispagestyle{fancyplain} \fancyhead{}
\fancyhead[L]{\textit{}\\
} \fancyfoot{} \vskip 6mm

\section{Introduction}

For $1<p<\infty$ and a bounded open subset $\Omega\subset \mbox
{R}^n$, the grand Lebesgue space $L^{p)}(\Omega)$ consists of all
functions $f(x)\in \bigcap _{0<\varepsilon \le p-1}
L^{p-\varepsilon} (\Omega)$ such that
$$
\|f\|_{p),\Omega} =\sup_{0<\varepsilon \le p-1} \left(\varepsilon
-\hspace{-4mm} \int_\Omega |f|^{p-\varepsilon} dx \right)^{\frac 1
{p-\varepsilon}} <\infty,  \eqno(1.1)
$$
where $-\hspace{-3.5mm} \int_\Omega =\frac {1}{|\Omega|}
\int_\Omega$ stands for the integral mean over $\Omega$. The grand
Sobolev space $W_0^{1,p)} (\Omega)$ consists of all functions $u\in
\bigcap _{0<\varepsilon \le p-1} W_0^{1,p-\varepsilon} (\Omega)$
such that
$$
\|u\|_{W_0^{1,p)}} =\sup _{0<\varepsilon \le p-1} \left(\varepsilon
-\hspace {-4mm} \int_\Omega |\nabla f|^{p-\varepsilon} dx
\right)^{\frac 1 {p-\varepsilon}} <\infty.  \eqno(1.2)
$$
These two spaces, slightly larger than $L^p(\Omega)$ and
$W_0^{1,p}(\Omega)$, respectively, were introduced in the paper [1]
by Iwaniec and Sbordone in 1992 where they studied the integrability
of the Jacobian under minimal hypotheses. For $p=n$ in [2] imbedding
theorems of Sobolev type were proved for functions $f\in
W_0^{1,n)}(\Omega)$. The small Lebesgue space $L^{(p}(\Omega)$ was
found by Fiorenza [3] in 2000 as the associate space of the grand
Lebesgue space $L^{p)}(\Omega)$. Fiorenza and Karadzhov gave in [4]
the following equivalent, explicit expressions for the norms of the
small and grand Lebesgue spaces, which depend only on the
non-decreasing rearrangement (provided that the underlying measure
space has measure 1):
$$
\|f\|_{L^{(p} } \approx \int_0^1 (1-\ln t)^{-\frac 1 p}
\left(\int_0^t [f^*(s)]^p ds \right)^{\frac 1 p}\frac {dt}  t, \ \
1<p<\infty,
$$
$$
\|f\|_{L^{p)}} \approx \sup _{0<t<1} (1-\ln t)^{-\frac 1 p}
\left(\int_t^1 [f^*(s)]^p ds \right)^{\frac 1 p } , \ \ 1<p<\infty.
$$
In [5], Greco, Iwaniec and Sbordone gave two more general
definitions than (1.1) and (1.2) in order to derive existence and
uniqueness results for $p$-harmonic operators. For $1<p<\infty$ and
$0\le \theta <\infty$, the grand $L^p$ space, denoted by $L^{\theta,
p)}(\Omega)$, consists of functions $f\in \bigcap _{0<\varepsilon
\le p-1} L^{p-\varepsilon} (\Omega)$ such that
$$
\|f\|_{\theta, p)} =\sup_{0<\varepsilon\le p-1} \varepsilon^ {\frac
\theta p} \|f\|_{p-\varepsilon} <\infty, \eqno(1.3)
$$
where
$$
\|f\|_{p-\varepsilon} =\left(-\hspace {-4mm} \int_\Omega
|f|^{p-\varepsilon} dx \right)^{\frac 1 {p-\varepsilon}}.
\eqno(1.4)
$$
The grand Sobolev space $W^{\theta, p)}(\Omega)$ consists of all
functions $f$ belonging to $\bigcap _{0<\varepsilon \le p-1}
W^{1,p-\varepsilon} (\Omega)$ and such that $\nabla f \in L^{\theta,
p)}(\Omega)$. That is,
$$
W^{\theta, p)}(\Omega)=\left\{f\in \bigcap _{0<\varepsilon \le p-1}
W^{1,p-\varepsilon} (\Omega): \nabla f \in L^{\theta, p)}(\Omega)
\right\}. \eqno(1.5)
$$

Grand and small Lebesgue spaces are important tools in dealing with
regularity properties for very weak solutions of $\cal A$-harmonic
equation as well as weakly quasiregular mappings, see [6, 7].

The aim of the present paper is to provide a generalization
$L^{\theta, \infty)}(\Omega)$, $0\le \theta <\infty$, of exponential
calss $EXP(\Omega)$, and prove that it is a Banach space. An
alternative definition of $L^{\theta, \infty)}(\Omega)$ is given in
terms of weak Lebesgue spaces. As an application, we obtain weak
monotonicity property for very weak solutions of $\cal A$-harmonic
equation with variable coefficients under some suitable conditions
related to $L^{\theta, \infty)}(\Omega)$. This paper also consider a
weighted space $L_w^{\theta, \infty)}(\Omega)$, and some boundedness
result for classical operators with respect to this space.

In the sequel, the letter $C$ is used for various constants, and may
change from one occurrence to another.

\section{A Generalization of Exponential Class}
Recall that $EXP(\Omega)$, the exponential class, consists of all
measurable functions $f$ such that
$$
\int_\Omega e^{\lambda |f|}dx <\infty
$$
for some $\lambda >0$. It is a Banach space under the norm
$$
\|f\|_{EXP} =\inf \left\{\lambda >0: \int_\Omega e^{|f|/\lambda} dx
\le 2 \right\}.
$$

In this section, we define a space $L^{\theta, \infty)}(\Omega)$,
$0\le \theta <\infty$, which is a generalization of $EXP(\Omega)$,
and prove that it is a Banach space.

\begin{definition} For $ \theta\ge 0$, the space $L^{\theta, \infty)
}(\Omega)$ is defined by
$$
L^{\theta, \infty)}(\Omega) =\left\{f (x) \in \bigcap _{1\le
p<\infty}L^p(\Omega): \sup_{1\le p<\infty} \frac 1
{p^\theta}\left(-\hspace {-4mm} \int_\Omega|f (x)|^pdx
\right)^{\frac 1 p}<\infty  \right\}.  \eqno(2.1)
$$
\end{definition}

It is not difficult to see that
$$
L^{\theta, \infty)}(\Omega) =\left\{g(x) \in \bigcap _{1\le
p<\infty}L^p(\Omega): {\limsup}_{p\rightarrow \infty}\frac 1
{p^\theta}\left(-\hspace {-4mm} \int_\Omega|g(x)|^pdx \right)^{\frac
1 p}<\infty  \right\}. \eqno(2.1)'
$$

There are two special cases of $L^{\theta,\infty)} (\Omega)$ that
are worth mentioning since they coincide with two known spaces.

\noindent {\bf Case 1}: $\theta =0$. In this case,
$$
L^{0, \infty)}(\Omega) =\left\{f (x) \in \bigcap _{1\le
p<\infty}L^p(\Omega): \sup_{1\le p<\infty} \left(-\hspace {-4mm}
\int_\Omega|f (x)|^pdx \right)^{\frac 1 p}<\infty  \right\}.
$$
From the fact (see [8, P12])
$$
L^\infty (\Omega) =\left\{f\in \bigcap _{1\le p <\infty}
L^p(\Omega): \lim_{p\rightarrow \infty} \|f\|_p<\infty \right\},
$$
we get $L^{0,\infty)}(\Omega) =L^{\infty}(\Omega)$.

\noindent {\bf Case 2}: $\theta =1$. The following proposition shows
that $L^{\theta, \infty}(\Omega)$ can be regarded as a
generalization of $EXP(\Omega)$.

\noindent {\bf Proposition 2.1 } {$L^{1,
\infty)}(\Omega)=EXP(\Omega)$}.

\begin{proof}
In order to realize that a function in the $L^{1,\infty)}(\Omega)$
space is in $EXP(\Omega)$, it is sufficient to read the last lines
of [2]. The vice-versa is also true, see e.g. [9, Chap. VI, exercise
no. 17].
\end{proof}

It is clear that for any $0\le \theta <\theta'\le \infty$ and any
$q<\infty$, we have the inclusions
$$
L^\infty (\Omega) \subset L^{\theta, \infty)} (\Omega) \subset
L^{\theta', \infty)}(\Omega)\subset L^q(\Omega). \eqno(2.2)
$$

The following theorem shows that, if $\theta>0$, then $L^{\theta,
\infty )}(\Omega)$ is slightly larger than $L^\infty (\Omega)$.

\begin{theorem} For $\theta >0$, the space $L^\infty (\Omega)$ is a proper subspace of
$L^{\theta, \infty )}(\Omega)$.
\end{theorem}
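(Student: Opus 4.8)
Since the chain of inclusions (2.2) already supplies $L^\infty(\Omega)\subset L^{\theta,\infty)}(\Omega)$, the entire content of the statement is that this inclusion is \emph{strict}. The plan is therefore to produce a single unbounded function that nevertheless lies in $L^{\theta,\infty)}(\Omega)$. For the borderline value $\theta=1$ the canonical unbounded member of $EXP(\Omega)=L^{1,\infty)}(\Omega)$ is a logarithmic singularity, so for a general $\theta>0$ I would take its $\theta$-th power.

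Concretely, since $\Omega$ is open I fix a ball $B=B(x_0,R)\subset\Omega$ and set $f(x)=\bigl(\log\frac{R}{|x-x_0|}\bigr)^\theta$ for $x\in B$ and $f(x)=0$ otherwise. On $B$ one has $R/|x-x_0|>1$, so $f\ge 0$, and $f(x)\to\infty$ as $x\to x_0$; hence $f\notin L^\infty(\Omega)$. It remains to verify $f\in L^{\theta,\infty)}(\Omega)$, i.e.\ that $p^{-\theta}\|f\|_p$ stays bounded as $p\to\infty$, where $\|f\|_p$ is the normalized norm (1.4). The key computation is to pass to polar coordinates about $x_0$ and substitute $r=Re^{-s}$, which turns the integral into a Gamma function:
\[
\int_\Omega |f|^p\,dx=c_nR^n\int_0^\infty s^{\theta p}e^{-ns}\,ds=\frac{c_nR^n}{n^{\theta p+1}}\,\Gamma(\theta p+1).
\]
In particular every such integral is finite, so $f\in\bigcap_{1\le p<\infty}L^p(\Omega)$, as required for membership.

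The heart of the argument — and the only genuine obstacle — is to read off the correct growth rate in $p$. Taking $p$-th roots and dividing by $p^\theta$ gives
\[
\frac{1}{p^\theta}\left(\frac{1}{|\Omega|}\int_\Omega|f|^p\,dx\right)^{1/p}=\left(\frac{c_nR^n}{|\Omega|\,n}\right)^{1/p}\frac{\Gamma(\theta p+1)^{1/p}}{n^\theta\,p^\theta},
\]
and Stirling's formula $\Gamma(\theta p+1)^{1/p}\sim(\theta p/e)^\theta$ shows the right-hand side tends to the finite limit $(\theta/(en))^\theta$ as $p\to\infty$. As the expression is continuous in $p$ on $[1,\infty)$ and has a finite limit at infinity, its supremum is finite, which is precisely $f\in L^{\theta,\infty)}(\Omega)$. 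Alternatively one can bypass the asymptotics by reducing to Proposition 2.1: writing $g(x)=\log\frac{R}{|x-x_0|}$ one checks the identity $\|f\|_p=\|g\|_{\theta p}^\theta$, and since $g\in EXP(\Omega)=L^{1,\infty)}(\Omega)$ (the standard logarithmic example) yields $\|g\|_q\le Cq$ for all $q$, one gets $\|f\|_p\le (C\theta)^\theta p^\theta$, i.e.\ $p^{-\theta}\|f\|_p\le(C\theta)^\theta$. Either route finishes the proof, and I would favour the second, since it leans on the already-established exponential-class case rather than on Stirling's asymptotics.
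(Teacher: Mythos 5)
Your proof is correct, and it chooses essentially the same witness as the paper --- a logarithm raised to the power $\theta$ --- but verifies membership in $L^{\theta,\infty)}(\Omega)$ by a genuinely different computation. The paper works one-dimensionally with $f(x)=(-\ln x)^\theta$ on $(0,1)$: it first reduces to $\theta\le 1$ via the inclusions (2.2), computes $\int_0^1(-\ln x)^m\,dx=m!$ by integration by parts and induction, and then controls $\sup_p p^{-\theta}\|f\|_p$ using the monotonicity of $p\mapsto\|f\|_p$ together with the crude bound $\bigl(m!\bigr)^{1/m}\le m$. You instead place the singularity at an interior point of a ball in $\Omega\subset\mbox{R}^n$, evaluate the $p$-th moment exactly as $c_nR^n n^{-\theta p-1}\Gamma(\theta p+1)$ and invoke Stirling, or --- more in the spirit of the paper's Proposition 2.1 --- use the scaling identity $\|f\|_p=\|g\|_{\theta p}^\theta$ with $g=\log(R/|x-x_0|)$ to reduce everything to the known fact $g\in EXP(\Omega)=L^{1,\infty)}(\Omega)$. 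Your route buys uniformity: it treats all $\theta>0$ at once (no reduction to $\theta\le1$) and works verbatim in $\mbox{R}^n$ rather than on an interval; the paper's route buys elementarity, needing neither Stirling's formula nor the identification with the exponential class. Two small points to tidy: (i) in the second route the bound $\|g\|_q\le Cq$ is only available for $q\ge 1$, so when $\theta p<1$ you should fall back on $\|g\|_{\theta p}\le\|g\|_1$, which still yields $p^{-\theta}\|f\|_p\le C'$ since $p\ge1$; (ii) you lean on (2.2) for the inclusion $L^\infty(\Omega)\subset L^{\theta,\infty)}(\Omega)$, which the paper proves directly (it is the one-line estimate $p^{-\theta}\|f\|_p\le M$), so strictly speaking that line should be included rather than cited.
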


\begin{proof} In the proof of Theorem 2.1 we always assume
$\theta>0$. Let $f (x) \in L^{\infty}(\Omega)$, then there exists a
constant $M<\infty$, such that $|f(x)|\le M$, a.e. $\Omega$. Thus,
$$
\sup_{1\le p<\infty} \frac 1 {p^\theta} \left(-\hspace {-4mm}
\int_\Omega|f(x)|^pdx \right)^{\frac 1 p} \le \sup _{1\le p<\infty}
\frac M {p^\theta} =M<\infty,
$$
which implies $f(x)\in L^{\theta, \infty )}(\Omega)$.

The following example shows that $L^\infty (\Omega) \subset
L^{\theta, \infty )}(\Omega)$ is a proper subset. Since we have the
inclusion (2.2), then it is no loss of generality to assume that
$\theta \le 1$.  Consider the function $f(x)=(-\ln x)^\theta$
defined in the open interval $(0,1)$. It is obvious that $f(x)
\notin L^{\infty}(0,1)$. We now show that $f(x) \in L^{\theta,
\infty)}(0,1)$. In fact, for $m$ a positive integer, integration by
parts yields
$$
\begin{array}{llll}
\displaystyle \int_0^1 (-\ln x)^m dx
&=&\displaystyle \left. x(-\ln x )^m\right|_0^1 -\int_0^1 x d(-\ln x)^m \\
&=&\displaystyle -\lim_{x\rightarrow 0^+} x(-\ln x)^m +m\int_0^1
(-\ln x)^{m-1}dx.
\end{array} \eqno(2.3)
$$
By L'Hospital's Law, one has
$$
\lim_{x\rightarrow 0^+} x(-\ln x)^m =\lim_{x\rightarrow 0^+}\frac
{(-\ln x )^m}{\frac 1 x} =\lim_{x\rightarrow 0^+} \frac {m(-\ln
x)^{m-1}}{\frac 1 x } =\cdots =m!\lim_{x\rightarrow 0^+}x=0.
$$
This equality together with (2.3) yields
$$
\int_0^1 (-\ln x)^m dx=m\int_0^1 (-\ln x)^{m-1}dx.
$$
By induction,
$$
\int_0^1 f^m(x) dx =m\int_0^1 (-\ln x)^{m-1}dx=\cdots = m!\int_0^1dx
=m!.  \eqno(2.4)
$$
Recall that the function
$$
p\mapsto \left(-\hspace{-4mm} \int_\Omega |f (x)|^p dx
\right)^{\frac 1 p}
$$
is non-decreasing, thus (2.4) yields
$$
\begin{array}{llll}
&\displaystyle \sup_{1\le p<\infty}\frac 1 {p^\theta}
\left(-\hspace{-4mm}\int_0^1 |f(x)|^pdx \right)^{\frac 1 p}\\
=&\displaystyle \sup_{1\le p<\infty}\left[\frac 1 {p} \left(\int_0^1 (-\ln x)
^{p\theta}dx \right)^{\frac 1 {p\theta}}\right]^\theta\\
\le&\displaystyle \sup_{1\le p<\infty}\left[ \frac 1 {p} \left(\int_0^1
 (-\ln x)^{[p\theta]+1}dx \right)^{\frac 1 {[p\theta]+1}}\right]^\theta\\
=&\displaystyle  \sup _{1\le p<\infty}\left[ \frac {([p\theta]+1)!
^{\frac 1 {[p\theta]+1}}}{p}\right]^\theta \le \sup _{1\le p<\infty}
\left[\frac {[p\theta]+1}{p} \right]^\theta\le  2,
\end{array}
$$
where we have used the assumption $\theta \le 1$, and $[p\theta]$ is
the integer part of $p\theta$. The proof of Theorem 2.1 has been
completed.
\end{proof}

For functions $f_1(x), f_2(x) \in L^{\theta, \infty )}(\Omega)$ and
$\alpha \in \mbox {R}$, the addition $f_1(x)+f_2(x)$ and the
multiplication $\alpha f_1(x)$ are defined as usual.

\begin{theorem}
$L^{\theta, \infty)}(\Omega)$ is a linear space on $\mbox {R}$.
\end{theorem}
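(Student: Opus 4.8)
The plan is to verify that $L^{\theta,\infty)}(\Omega)$ is closed under addition and scalar multiplication; once this is established, all the remaining vector space axioms (associativity and commutativity of addition, the existence of a zero element and of additive inverses, and the distributive and associative laws for scalars) are inherited automatically from the pointwise algebraic operations on measurable functions. Throughout I write $N(f)=\sup_{1\le p<\infty} p^{-\theta}\left(-\hspace{-4mm}\int_\Omega |f|^p\,dx\right)^{1/p}$ for the quantity appearing in Definition 2.1, so that $f\in L^{\theta,\infty)}(\Omega)$ means $f\in\bigcap_{1\le p<\infty}L^p(\Omega)$ together with $N(f)<\infty$.

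First I would dispose of scalar multiplication. For $f\in L^{\theta,\infty)}(\Omega)$ and $\alpha\in\mbox{R}$, pulling the constant out of the integral gives, for every $p$,
$$p^{-\theta}\left(-\hspace{-4mm}\int_\Omega |\alpha f|^p\,dx\right)^{1/p}=|\alpha|\,p^{-\theta}\left(-\hspace{-4mm}\int_\Omega |f|^p\,dx\right)^{1/p}.$$
Taking the supremum over $p$ yields $N(\alpha f)=|\alpha|\,N(f)<\infty$, and since $\alpha f\in L^p(\Omega)$ for each $p$, we conclude $\alpha f\in L^{\theta,\infty)}(\Omega)$.

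The substantive step is closure under addition, and here the key tool is Minkowski's inequality applied to the normalized measure $d\mu=dx/|\Omega|$, which is legitimate because $\mu$ is a finite (indeed probability) measure. For fixed $p\ge 1$ and $f_1,f_2\in L^{\theta,\infty)}(\Omega)$ it gives
$$\left(-\hspace{-4mm}\int_\Omega |f_1+f_2|^p\,dx\right)^{1/p}\le \left(-\hspace{-4mm}\int_\Omega |f_1|^p\,dx\right)^{1/p}+\left(-\hspace{-4mm}\int_\Omega |f_2|^p\,dx\right)^{1/p}.$$
Dividing through by $p^\theta$ and invoking the subadditivity of the supremum, $\sup_p(a_p+b_p)\le\sup_p a_p+\sup_p b_p$, I would obtain $N(f_1+f_2)\le N(f_1)+N(f_2)<\infty$. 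The same inequality (applied with the plain integral) shows $f_1+f_2\in L^p(\Omega)$ for every $p$, so that $f_1+f_2\in\bigcap_{1\le p<\infty}L^p(\Omega)$ as well; hence $f_1+f_2\in L^{\theta,\infty)}(\Omega)$.

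There is no genuine obstacle in this argument — it rests entirely on Minkowski's inequality and elementary properties of the supremum — so the only point requiring care is to apply Minkowski's inequality to the integral mean $-\hspace{-4mm}\int_\Omega$ rather than to $\int_\Omega$; since the two differ only by the constant factor $1/|\Omega|$, which can be absorbed, the inequality transfers without change. I would note in passing that the estimate $N(f_1+f_2)\le N(f_1)+N(f_2)$ and the identity $N(\alpha f)=|\alpha|N(f)$ derived here are precisely the triangle inequality and the homogeneity of $N$, so they will be reused directly when $N$ is later shown to be a norm.
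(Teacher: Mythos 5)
Your proof is correct; the paper itself omits the argument for this theorem ("easy to prove, we omit the details"), and your Minkowski-based verification of closure under addition is precisely the computation the paper carries out later in establishing the triangle inequality for $\|\cdot\|_{\theta,\infty)}$ in Theorem 2.3. Nothing further is needed.
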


\begin{proof}
This theorem is easy to prove, we omit the details.
\end{proof}

For $f(x)\in L^{\theta,\infty )}(\Omega)$, we define
$$
\|f\|_{\theta, \infty ),\Omega}=\sup_{1\le p<\infty} \frac 1
{p^\theta} \left(-\hspace {-4mm} \int_\Omega|f(x)|^pdx
\right)^{\frac 1 p}. \eqno(2.5)
$$
We drop the subscript $\Omega$ from $\|\cdot\|_{\theta, \infty ),
\Omega}$ when there is no possibility of confusion.

\begin{theorem}
$\|\cdot\|_{\theta, \infty)}$ is a norm.
\end{theorem}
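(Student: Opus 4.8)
The plan is to verify the three defining axioms of a norm on $L^{\theta,\infty)}(\Omega)$: positive definiteness, absolute homogeneity, and the triangle inequality. Finiteness of $\|f\|_{\theta,\infty)}$ for $f\in L^{\theta,\infty)}(\Omega)$ is built into Definition 2.1, so only these three properties remain.

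First I would dispose of positive definiteness. Nonnegativity is immediate, since $\|f\|_{\theta,\infty)}$ is a supremum of nonnegative quantities. If $\|f\|_{\theta,\infty)}=0$, then in particular the term corresponding to $p=1$ vanishes, which gives $-\hspace{-4mm}\int_\Omega |f|\,dx=0$ and hence $f=0$ a.e. in $\Omega$; conversely, $f=0$ a.e. forces every term, and therefore the supremum, to be zero. Absolute homogeneity follows by pulling the scalar out of each $L^p$ mean: for $\alpha\in\mbox{R}$ and every $p$,
$$
\left(-\hspace{-4mm}\int_\Omega |\alpha f|^p\,dx\right)^{\frac 1 p}=|\alpha|\left(-\hspace{-4mm}\int_\Omega |f|^p\,dx\right)^{\frac 1 p},
$$
and the constant factor $|\alpha|$ passes through the supremum, yielding $\|\alpha f\|_{\theta,\infty)}=|\alpha|\,\|f\|_{\theta,\infty)}$.

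The triangle inequality is the only step requiring a genuine estimate, and the key idea is to apply Minkowski's inequality fiberwise in $p$ \emph{before} taking the supremum. For each fixed $p\ge 1$, Minkowski's inequality for the normalized integral gives
$$
\left(-\hspace{-4mm}\int_\Omega |f_1+f_2|^p\,dx\right)^{\frac 1 p}\le \left(-\hspace{-4mm}\int_\Omega |f_1|^p\,dx\right)^{\frac 1 p}+\left(-\hspace{-4mm}\int_\Omega |f_2|^p\,dx\right)^{\frac 1 p}.
$$
Dividing through by $p^\theta$ and then using the elementary fact that the supremum of a sum does not exceed the sum of the suprema, I would conclude
$$
\|f_1+f_2\|_{\theta,\infty)}\le \sup_{1\le p<\infty}\frac 1 {p^\theta}\left(-\hspace{-4mm}\int_\Omega |f_1|^p\,dx\right)^{\frac 1 p}+\sup_{1\le p<\infty}\frac 1 {p^\theta}\left(-\hspace{-4mm}\int_\Omega |f_2|^p\,dx\right)^{\frac 1 p}=\|f_1\|_{\theta,\infty)}+\|f_2\|_{\theta,\infty)}.
$$

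There is no serious obstacle here; the entire argument reduces to the classical Minkowski inequality together with subadditivity of the supremum. The only subtlety worth flagging is that Minkowski must be invoked separately for each fixed $p$ and the passage to the supremum carried out afterwards, since the values of $p$ that (nearly) realize the three suprema need not coincide — this is exactly why one bounds the supremum of the sum by the sum of the suprema rather than attempting to compare the three quantities at a single $p$.
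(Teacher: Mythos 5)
Your proposal is correct and follows essentially the same route as the paper: verify the three norm axioms, with the triangle inequality obtained by applying Minkowski's inequality in $L^p$ for each fixed $p$ and then bounding the supremum of the sum by the sum of the suprema. The extra detail you give on positive definiteness (using the $p=1$ term) is a welcome elaboration of what the paper dismisses as obvious, but it is not a different argument.
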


\begin{proof}
(1) It is obvious that $\|f\|_{\theta,\infty)}\ge 0$ and
$\|f\|_{\theta,\infty )}=0$ if and only if $f =0$ a.e. $\Omega$;

(2) For any $f_1(x), f_2(x)\in L^{\theta, \infty)} (\Omega)$,
Minkowski inequality in $L^p (\Omega)$ yields
$$
\begin{array}{llll}
\|f_1 +f_2\|_{\theta,\infty)} &=&\displaystyle \sup_{1\le p <\infty}
\frac 1 {p^\theta} \left(-\hspace{-4mm} \int_\Omega |f_1+f_2|^pdx \right)^{\frac 1 p}\\
&\le & \displaystyle \sup_{1\le p <\infty} \frac 1 {p^\theta} \left[
\left(-\hspace{-4mm} \int_\Omega |f_1|^pdx \right)^{\frac 1 {p}}+
\left(-\hspace{-4mm} \int_\Omega |f_2|^pdx \right)^{\frac 1 {p}}\right]\\
&\le & \displaystyle \sup_{1\le p <\infty}  \frac 1 {p^\theta}
\left(-\hspace{-4mm} \int_\Omega |f_1|^pdx \right)^{\frac 1 {p}}+
\sup_{1\le p <\infty}  \frac 1 {p^\theta}
\left(-\hspace{-4mm} \int_\Omega |f_2|^pdx \right)^{\frac 1 {p}}\\
&= & \displaystyle \|f_1\|_{\theta, \infty)}
+\|f_2\|_{\theta,\infty)};
\end{array}
$$

(3) For all $\lambda \in \mbox {R}$ and all $f(x)\in L^{\theta,
\infty)}(\Omega)$, it is obvious that $\|\lambda f
\|_{\theta,\infty)}=|\lambda| \|f\|_{\theta,\infty)}$.
\end{proof}

\begin{theorem}
$\left(L^{\theta, \infty)}(\Omega), \|\cdot\|_{\theta,
\infty)}\right)$ is a Banach space.
\end{theorem}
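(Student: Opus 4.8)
The plan is to prove completeness, since Theorems 2.2--2.4 already establish that $\left(L^{\theta,\infty)}(\Omega),\|\cdot\|_{\theta,\infty)}\right)$ is a normed linear space; thus it remains only to show that every Cauchy sequence converges in norm to an element of the space. The starting observation is that the $L^{\theta,\infty)}$ norm dominates each individual $L^p$ norm: evaluating the supremum in (2.5) at a fixed exponent $p$ gives
$$
\left(-\hspace{-4mm}\int_\Omega |f|^p\,dx\right)^{\frac 1 p}\le p^\theta\,\|f\|_{\theta,\infty)},
$$
and since $\Omega$ is bounded this yields $\|f\|_{L^p(\Omega)}\le |\Omega|^{\frac 1 p}p^\theta\|f\|_{\theta,\infty)}$ for every $p\ge 1$. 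Hence a sequence that is Cauchy in $L^{\theta,\infty)}(\Omega)$ is Cauchy in $L^p(\Omega)$ for each fixed $p$.

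First I would let $\{f_k\}$ be Cauchy in $L^{\theta,\infty)}(\Omega)$. By the above, for every fixed $p\ge 1$ the sequence is Cauchy in the complete space $L^p(\Omega)$, so it has an $L^p$-limit. These limits agree almost everywhere (for instance because $L^p$-convergence forces convergence in measure, and limits in measure are a.e.\ unique along a common subsequence), so there is a single measurable function $f$ with $f_k\to f$ in $L^p(\Omega)$ for every $p\ge 1$; in particular $f\in\bigcap_{1\le p<\infty}L^p(\Omega)$.

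Next I would upgrade $L^p$-convergence to convergence in the $L^{\theta,\infty)}$ norm. Given $\varepsilon>0$, choose $N$ so that $\|f_k-f_j\|_{\theta,\infty)}<\varepsilon$ for all $k,j\ge N$, which means
$$
\frac 1 {p^\theta}\left(-\hspace{-4mm}\int_\Omega |f_k-f_j|^p\,dx\right)^{\frac 1 p}<\varepsilon\qquad\text{for all }p\ge 1,\ k,j\ge N.
$$
Now I would fix $k\ge N$ and fix $p$, and let $j\to\infty$. Since $f_j\to f$ in $L^p(\Omega)$, continuity of the $L^p$ norm gives $\left(-\hspace{-4mm}\int_\Omega|f_k-f_j|^p\,dx\right)^{1/p}\to\left(-\hspace{-4mm}\int_\Omega|f_k-f|^p\,dx\right)^{1/p}$, whence $\frac 1 {p^\theta}\left(-\hspace{-4mm}\int_\Omega|f_k-f|^p\,dx\right)^{1/p}\le\varepsilon$. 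Crucially this bound holds for every $p\ge 1$ with the same $\varepsilon$, so taking the supremum over $p$ yields $\|f_k-f\|_{\theta,\infty)}\le\varepsilon$ for all $k\ge N$. This simultaneously shows that $f_k-f\in L^{\theta,\infty)}(\Omega)$, that $f=f_k-(f_k-f)\in L^{\theta,\infty)}(\Omega)$ since the space is linear (Theorem 2.2), and that $f_k\to f$ in norm.

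The step I expect to require the most care is the passage to the limit in $j$ while keeping control uniform in $p$: the correct order is to fix the exponent $p$ first, pass $j\to\infty$ at that fixed $p$ using continuity of the $L^p$ norm, and only afterwards take the supremum over $p$; interchanging the supremum and the limit directly would not be justified. Everything else is routine.
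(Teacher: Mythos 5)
Your proof is correct and follows essentially the same strategy as the paper: extract a candidate limit from the completeness of the ordinary Lebesgue spaces, then pass to the limit in the Cauchy estimate at each fixed exponent before taking the supremum. If anything, your justification of the limit passage (via norm convergence in each $L^p$ and continuity of the $L^p$ norm) is cleaner than the paper's, which goes through an $L^1$ decomposition of $\Omega$, a diagonal a.e.-convergent subsequence, and an unstated appeal to Fatou's lemma when letting the index tend to infinity inside the integral.
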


\begin{proof}
Suppose that $\{f_n\}_{n=1}^\infty \subset L^{\theta, \infty)}
(\Omega)$, and for any positive integer $p$,
$$
\|f_{n+p} -f_n\|_{\theta, \infty)} \rightarrow 0, \ \ n \rightarrow
\infty. \eqno(2.6)
$$
Since $\Omega$ is $\sigma$-finite, then $\Omega =\bigcup
_{m=1}^\infty \Omega_m$ with $|\Omega_m|<\infty$. It is no loss of
generality to assume that the $\Omega_m$s are disjoint. (2.4)
implies that for any positive integer $p$,
$$
\int_{\Omega _m} |f _{n+p}(x) -f_n (x)|dx \rightarrow 0, \ \
n\rightarrow \infty.
$$
Thus, by the completeness of $L^1(\Omega _m)$, there exists
$f^{(m)}(x)\in L^1(\Omega_m)$, such that
$$
f_n(x) \rightarrow f^{(m)} (x), \ n\rightarrow \infty, \ \mbox { in
} L^1(\Omega_m). \eqno(2.7)
$$
Hence for any positive integer $m$, there exists a subsequence
$\{f_n^{(m)} (x)\}$ of $\{f^{m-1}_n(x)\}$,
$\{f_n^{(0)}(x)\}=\{f_n(x) \}$, such that
$$
f _n^{(m)} (x) \rightarrow f^{(m)} (x), \ \ n\rightarrow \infty, \
\mbox { a.e. } x\in \Omega _m.
$$
If we let
$$
f (x) =f^{(m)} (x), \ \ x\in \Omega_m, \ \ m=1,2,\cdots ,
$$
then
$$
f _n^{(n)}(x) \rightarrow f (x), \ \ n\rightarrow \infty, \mbox {
a.e. } x\in \Omega.
$$
It is no loss of generality to assume that the subsequence $\{f_n
^{(n)}(x)\}$ of $\{f _n(x)\}$ is itself, thus
$$
f_n(x) \rightarrow f (x), \ \ n\rightarrow \infty, \ \mbox { a.e. }
x\in \Omega.
$$
We now prove $f (x) \in L^{\theta, \infty)} (\Omega)$ and $\|f_n-f
\|_{\theta, \infty)}\rightarrow 0$, $(n\rightarrow \infty)$. In
fact, by (2.6), for any $\varepsilon >0$, there exists
$N=N(\varepsilon)$, such that if $n>N$, then
$$
\sup_{1\le q<\infty} \frac 1 {q^\theta} \left(-\hspace{-4mm}
\int_\Omega |f _{n+p}(x) -f_n(x)| ^q dx \right)^{\frac 1 q}
<\varepsilon.
$$
Let $p\rightarrow \infty$, one has
$$
\sup_{1\le q<\infty} \frac 1 {q^\theta} \left(-\hspace{-4mm}
\int_\Omega |f_{n}(x) -f(x)| ^q dx \right)^{\frac 1 q} <\varepsilon,
\ \ n>N.
$$
Hence $f (x) \in L^{\theta, \infty)}(\Omega)$, and $\|f _n(x) -f
(x)\|_{\theta, \infty)} \rightarrow 0$, $n\rightarrow \infty$. This
completes the proof of Theorem 2.4.
\end{proof}

\begin{definition} The grand Sobolev space $W^{\theta, \infty)}(\Omega)$
consists of all functions $f$ belonging to $\bigcap _{1\le
p<\infty}W^{1,\infty)}(\Omega)$ and such that $\nabla f \in
L^{\theta, \infty)}(\Omega)$. That is,
$$
W^{\theta, \infty)}(\Omega) =\left\{f\in \bigcap _{1\le
p<\infty}W^{1,\infty)}(\Omega): \nabla f \in L^{\theta,
\infty)}(\Omega) \right\}.
$$
\end{definition}

This definition will be used in Section 4.

\section{An Alternative Definition of $L^{\theta, \infty}(\Omega)$}
In this section, we give an alternative definition of $L^{\theta,
\infty)}(\Omega)$ in terms of weak Lebesgue spaces. Let us first
recall the definition of weak $L^p (0<p<\infty)$ spaces, or the
Marcinkiewicz spaces, $L_{weak}^p(\Omega)$, see [10, Chapter 1,
Section 2], [11, Chapter 2, Section 5] or [12, Chapter 2, Section
18].

\begin{definition}
Let $0<p<\infty$. We say that $f\in L_{weak}^{p}(\Omega)$ if and
only if there exists a positive constant $k=k(f)$ such that
$$
f_*(t)= |\{x\in \Omega: |f(x)|>t\}| \le \frac {k}{t^p} \eqno(3.1)
$$
for every $t>0$, where $|E|$ is the $n$-dimensional Lebesgue measure
of $E\subset \mbox {R}^n$, and $f_* (t)=|\{x\in \Omega: |f(x)|>t\}|$
denotes the distribution function of $f$.
\end{definition}

For $p>1$, we recall that if $f\in L_{weak}^p(\Omega)$, then $f\in
L^q (\Omega)$ for every $1\le q <p$, and $f\in L_{weak}^p(\Omega)$
if and only if for every measurable set $E\subset \Omega$, the
following inequality holds
$$
\int_E |f(x)| d x \le c|E|^{\frac {p-1}{p}}
$$
for some constant $c>0$.

(3.1) is equivalent to
$$
M_p(f)=\left[\frac 1 {|\Omega|} \sup_{t>0} t^p f_*(t) \right]^{\frac
1 p}<\infty. \eqno(3.2)
$$
Recall also that
$$
\int_\Omega |f(x)|^s dx =s\int_0^\infty t^{s-1} f_*(t)dt <\infty.
\eqno(3.3)
$$

\begin{definition} For $\theta\ge 0$, the weak space $L_{weak}^{\theta, \infty
}(\Omega)$ is defined by
$$
L_{weak}^{\theta, \infty} (\Omega) =\left\{f\in \bigcap _{1\le p
<\infty} L_{weak} ^p (\Omega): \sup _{1\le p <\infty}\frac {M_p(f)}
{p^\theta} <\infty\right\}. \eqno(3.5)
$$
\end{definition}

The following theorem shows that $L_{weak}^{\theta,
\infty}(\Omega)=L^{\theta, \infty)}(\Omega)$, thus
$L_{weak}^{\theta, \infty} (\Omega)$ can be regarded as an
alternative definition of the space $L^{\theta, \infty)} (\Omega)$.

\begin{theorem}
$$
L_{weak}^{\theta, \infty}(\Omega)=L^{\theta, \infty) }(\Omega).
$$
\end{theorem}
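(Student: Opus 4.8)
The plan is to establish the set equality by proving the two inclusions separately; along the way the same estimates show that the two defining quantities are comparable up to a constant depending only on $\theta$. The two ingredients I would use are Chebyshev's inequality and the layer-cake representation (3.3).

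For the inclusion $L^{\theta,\infty)}(\Omega)\subset L_{weak}^{\theta,\infty}(\Omega)$ I would argue directly. Chebyshev's inequality gives, for every $t>0$ and every $p\ge 1$,
$$
t^p f_*(t)\le \int_\Omega |f|^p\,dx,
$$
so that $M_p(f)^p=\frac{1}{|\Omega|}\sup_{t>0}t^p f_*(t)\le -\hspace{-4mm}\int_\Omega |f|^p\,dx$. Dividing by $p^{\theta}$ and taking the supremum over $p$ shows $\sup_{p\ge 1} M_p(f)/p^\theta\le \|f\|_{\theta,\infty)}<\infty$, which gives this inclusion.

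The reverse inclusion $L_{weak}^{\theta,\infty}(\Omega)\subset L^{\theta,\infty)}(\Omega)$ is the heart of the matter. Writing $K=\sup_{p\ge 1}M_p(f)/p^\theta<\infty$, the hypothesis reads $f_*(t)\le |\Omega|\,K^p p^{\theta p}t^{-p}$ for all $t>0$ and all $p\ge 1$; I also keep the trivial bound $f_*(t)\le |\Omega|$. Fix $q\ge 1$ and, via (3.3), split
$$
\int_\Omega |f|^q\,dx=q\int_0^a t^{q-1}f_*(t)\,dt+q\int_a^\infty t^{q-1}f_*(t)\,dt ,
$$
estimating the first integral with $f_*(t)\le|\Omega|$ and the second, for a parameter $p>q$, with the weak bound. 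This yields
$$
\int_\Omega |f|^q\,dx\le |\Omega|\,a^q+\frac{q}{p-q}\,|\Omega|\,K^p p^{\theta p}a^{q-p}.
$$
Optimizing in the splitting level gives $a=Kp^{\theta}$, after which both terms collapse to $|\Omega|K^q p^{\theta q}$ up to the factor $\frac{p}{p-q}$, and the clean choice $p=2q$ produces
$$
\left(-\hspace{-4mm}\int_\Omega |f|^q\,dx\right)^{1/q}\le 2^{1/q}\,2^{\theta}K\,q^{\theta}\le 2^{\theta+1}K\,q^{\theta}.
$$
Dividing by $q^\theta$ and taking the supremum over $q\ge 1$ bounds $\|f\|_{\theta,\infty)}$ by $2^{\theta+1}K<\infty$; the same estimate shows $f\in\bigcap_{1\le p<\infty}L^p(\Omega)$, so $f\in L^{\theta,\infty)}(\Omega)$.

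The main obstacle I anticipate lies in this second inclusion, specifically in keeping the constant uniform in $q$. The tail integral converges only because I am free to take $p>q$, and the decisive point is to let $p$ grow linearly with $q$ (the choice $p=2q$) rather than fixing it: this is exactly what converts the $p^{\theta p}$ growth in the weak bound into the desired $q^{\theta}$ growth of the $L^q$ averages. The boundedness of $\Omega$, so that $|\Omega|<\infty$ and the normalized integral is meaningful, is used throughout.
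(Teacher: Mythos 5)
Your proposal is correct and follows essentially the same route as the paper: Chebyshev's inequality for the inclusion $L^{\theta,\infty)}(\Omega)\subset L_{weak}^{\theta,\infty}(\Omega)$, and for the converse the layer-cake formula (3.3) split at a level comparable to $M_p(f)$ with the auxiliary exponent $p$ taken to grow linearly in $q$ (the paper uses $p=s+1$ and $a=M_p(f)$ where you use $p=2q$ and $a=Kp^{\theta}$, which also lets you avoid the paper's separate treatment of small $s$). These are only cosmetic differences in parameter choices.
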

\begin{proof} We divided the proof into two steps.

{\bf Step 1 } $L_{weak}^{\theta, \infty}(\Omega)\subset L^{\theta,
\infty) }(\Omega)$.

If $1\le s <p$, for each $a>0$, one can split the integral in the
right-hand side of (3.3) to obtain
$$
\begin{array}{llll}
\displaystyle \int_\Omega |f|^s dx &=&\displaystyle  s\int_0^a
t^{s-1} f_*(t)dt + s\int_a^\infty t^{s-1} f_*(t)dx \\
& \le &\displaystyle |\Omega |a^s +\frac {sa^{s-p}}{p-s} |\Omega|
M_p^p(f).
\end{array}
$$
The second integral has been estimated by the inequality $f_*(t) \le
|\Omega| t^{-p} M_p^p (f)$, which is a direct consequence of the
definition of the constant $M_p(f)$ (see (3.2)). Setting $a=M_p(f)$
we arrive at
$$
-\hspace{-4mm} \int_\Omega |f|^s dx \le M_p^s(f) +\frac s {p-s}
M_p^s(f) =\frac {p}{p-s} M_p(f).
$$
This implies
$$
\frac 1 {s^\theta} \left(-\hspace {-4mm} \int_\Omega |f|^s dx
\right)^{\frac 1 s } \le \frac 1 {s^\theta} \left(\frac
{p}{p-s}\right)^{\frac 1 s } M_p(f). \eqno(3.6)
$$
Therefore
$$
\begin{array}{llll}
&\displaystyle \sup_{1\le s <\infty} \frac 1 {s^\theta}
\left(-\hspace{-4mm}
\int_\Omega |f|^s dx \right)^{\frac 1 s } \\
=&\displaystyle \max \left\{ \sup_{1\le s <2} \frac 1 {s^\theta}
\left(-\hspace{-4mm} \int_\Omega |f|^s dx \right)^{\frac 1 s },
\sup_{2\le s <\infty }  \frac 1 {s^\theta} \left(-\hspace{-4mm}
\int_\Omega |f|^s dx \right)^{\frac 1 s }\right\}\\
\le &\displaystyle \max \left\{\|f\|_2, \sup_{2\le s =p-1<\infty}
\frac 1 {s^\theta} \left(-\hspace{-4mm} \int_\Omega |f|^s dx
\right)^{\frac 1
s }\right\}\\
\le &\displaystyle \max \left\{\|f\|_2, \sup_{2\le s<\infty} \frac 1
{s^\theta} (s+1)^{\frac 1 s } M_{s+1} (f)\right\}\\
\le &\displaystyle \max \left\{\|f\|_2, 4\sup_{1\le s<\infty} \frac
{M_s(f)}{{s^\theta}}\right\}<\infty,
\end{array}
$$
here we have used (3.6) and the definition of $L_{weak}^\infty
(\Omega)$.

{\bf Step 2 } $L^{\theta, \infty) }(\Omega)\subset
L_{weak}^{\infty}(\Omega)$.

Since for any $t>0$,
$$
t^p f_*(t) =t^p \int_{\{x\in \Omega: |f(x)|>t\}} dx \le \int_{\{x\in
\Omega: |f(x)|>t\} }|f|^p dx \le \int_\Omega |f|^p dx,
$$
then
$$
\sup _{t>0} t^pf_*(t) \le \int_\Omega |f|^p dx .
$$
This implies
$$
M_p(f) =\left[\frac 1 {|\Omega|}  \sup_{t>0} t^pf_*(t)\right]^{\frac
1 p} \le \left(-\hspace{-4mm} \int_\Omega |f|^p dx \right)^{\frac 1
p }.
$$
Thus
$$
\sup_{1\le p <\infty} \frac {M_p(f)}{p^\theta} \le \sup_{1\le p
<\infty}\frac 1 {p^\theta}\left(-\hspace{-4mm} \int_\Omega |f|^p dx
\right)^{\frac 1 p }<\infty.
$$
The proof of Theorem 3.1 has been completed.
\end{proof}

\section{An Application}

In this section, we give an application of the space $L^{\theta,
\infty} (\Omega)$ to monotonicity property of very weak solutions of
the $\cal A$-harmonic equation
$$
\mbox {div}{\cal A}(x,\nabla u(x)) =0,  \eqno(4.1)
$$
where ${\cal A}: \Omega \times \mbox {R}^n \rightarrow \mbox {R}^n$
be a mapping satisfying the following assumptions:

(1) the mapping $x\mapsto {\cal A}(x,\xi)$  is measurable for all
$\xi \in \mbox {R}^n$,

(2) the mapping $\xi \mapsto {\cal A}(x,\xi)$ is continuous for a.e. $x\in \mbox {R}^n$,\\
for all $\xi \in \mbox {R}^n$, and a.e. $x\in \mbox {R}^n$,

(3)
$$
\langle {\cal A}(x,\xi), \xi \rangle \ge \gamma (x) |\xi|^p,
$$

(4)
$$
|{\cal A}(x,\xi)|\le \tau (x) |\xi|^{p-1},
$$
where $1<p<\infty$, $0<\gamma (x) \le \tau (x)<\infty$, a.e.
$\Omega$.

Conditions (1) and (2) insure that the composed mapping $x\mapsto
{\cal A}(x,g(x))$ is measurable whenever $g$ is measurable. The
degenerate ellipticity of the equation is described by condition
(3). Finally, condition (4) guarantees that, for any $0\le \theta
<\infty$ and any $\varepsilon>0$, ${\cal A}(x,\nabla u)$ can be
integrated for $u\in W^{\theta,p}(\Omega)$ against functions in
$W^{1,\frac {p-\varepsilon}{1-p\varepsilon}}(\Omega) $ with compact
support.

\begin{definition} A function
$u\in W_{loc}^{1,r} (\Omega)$, $\max \{1, p-1\} <r\le p$, is called
a very weak solution of (4.1), if
$$
\int_\Omega \langle {\cal A}(x,\nabla u(x)), \nabla \varphi (x)
\rangle dx =0
$$
for all $\varphi \in W_0^{1,\frac {r}{r-p+1}}(\Omega)$.
\end{definition}

A fruitful idea in dealing with the continuity properties of Sobolev
functions is the notion of monotonicity. In one dimension a function
$u:\Omega \rightarrow \mbox {R}$  is monotone if it satisfies both a
maximum and minimum principle on every subinterval. Equivalently, we
have the oscillation bounds $\mbox {osc}_I u \le \mbox
{osc}_{\partial I} u$ for every interval $I\subset \Omega$. The
definition of monotonicity in higher dimensions closely follows this
observation.

A continuous function $u:\Omega \rightarrow \mbox {R}^n$  defined in
a domain $\Omega \subset \mbox {R}^n$ is monotone if
$$
\mbox {osc}_B u \le \mbox {osc}_{\partial B} u
$$
for every ball $B\subset \mbox {R}^n$. This definition in fact goes
back to Lebesgue [13] in 1907 where he first showed the relevance of
the notion of monotonicity in the study of elliptic PDEs in the
plane. In order to handle very weak solutions of $\cal A$-harmonic
equation, we need to extend this concept, dropping the assumption of
continuity. The following definition can be found in [14], see also
[6, 7].

\begin{definition}
A real-valued function $u\in W_{loc}^{1,1}(\Omega)$ is said to be
weakly monotone if, for every ball $B\subset \Omega$ and all
constants $m\le M$ such that
$$
|M-u| -|u-m| +2u -m-M \in W_0^{1,1}(B),  \eqno(4.2)
$$
we have
$$
m\le u(x) \le M  \eqno(4.3)
$$
for almost every $x\in B$.
\end{definition}

For continuous functions (4.2) holds if and only if $m\le u(x)\le M$
on $\partial B$. Then (4.3) says we want the same condition in $B$,
that is the maximum and minimum principles.

Manfredi's paper [14] should be mentioned as the beginning of the
systematic study of weakly monotone functions. Koskela, Manfredi and
Villamor obtained in [15] that $\cal A$-harmonic functions are
weakly monotone. In [16], the first author obtained a result which
states that very weak solutions $u\in W_{loc}
^{1,p-\varepsilon}(\Omega)$ of the $\cal A$-harmonic equation are
weakly monotone provided $\varepsilon $ is small enough. The
objective of this section is to extend the operator $\cal A$ to
spaces slightly larger than $L^p(\Omega)$.

\begin{theorem}
Let $\gamma (x)>0$, a.e. $\Omega$, $\tau (x)\in L^{\theta_1,
\infty)} (\Omega)$. If $u\in W^{\theta_2,p)}(\Omega)$ is a very weak
solution to (4.1), then it is weakly monotone in $\Omega$ provided
that $\theta_1+\theta_2<1$.
\end{theorem}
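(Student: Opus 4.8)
The plan is to prove the two one-sided bounds in (4.3) separately; by symmetry it suffices to show that if $B\subset\Omega$ and $m\le M$ satisfy (4.2), then $u\le M$ a.e. in $B$, the bound $u\ge m$ following by replacing $(u,m,M)$ with $(-u,-M,-m)$. Writing $v$ for the function in (4.2), one checks that $v=2(u-M)^+-2(m-u)^+$, and since these two positive parts have disjoint supports, (4.2) forces $w:=(u-M)^+\in W_0^{1,1}(B)$ with $\nabla w=\nabla u\,\chi_{\{u>M\}}$. Because $u\in W^{\theta_2,p)}(\Omega)$ we have $\nabla u\in L^{p-\varepsilon}(\Omega)$ for every small $\varepsilon>0$ with the grand-space bound $\|\nabla u\|_{p-\varepsilon}\le C\varepsilon^{-\theta_2/p}$, and hence $w\in W_0^{1,p-\varepsilon}(B)$. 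The goal is to prove $\nabla w=0$ a.e., which yields $w\equiv0$ and therefore $u\le M$.

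The difficulty is that $w$ itself is not an admissible test function: Definition 4.1 requires test functions in $W_0^{1,(p-\varepsilon)/(1-\varepsilon)}(B)$, an exponent strictly larger than $p-\varepsilon$. To overcome this I would invoke the Hodge-type decomposition of Iwaniec--Sbordone: for small $\varepsilon>0$ there exist $\phi\in W_0^{1,(p-\varepsilon)/(1-\varepsilon)}(B)$ and a divergence-free field $H$ with
$$|\nabla w|^{-\varepsilon}\nabla w=\nabla\phi+H,\qquad \|H\|_{(p-\varepsilon)/(1-\varepsilon)}\le C\varepsilon\,\|\nabla w\|_{p-\varepsilon}^{1-\varepsilon}.$$
The exponent of $\phi$ is exactly $r/(r-p+1)$ with $r=p-\varepsilon$, so $\phi$ (extended by $0$) is admissible. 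Testing (4.1) with $\phi$, and using that $|\nabla w|^{-\varepsilon}\nabla w$ is supported in $\{u>M\}$ where $\nabla u=\nabla w$, together with the coercivity (3), gives
$$\int_B\gamma(x)\,|\nabla w|^{p-\varepsilon}\,dx\le\Big|\int_B\langle\mathcal{A}(x,\nabla u),H\rangle\,dx\Big|.$$

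To estimate the right-hand side I would use the growth condition (4), H\"older's inequality with the conjugate exponent $(p-\varepsilon)/(p-1)$ of $(p-\varepsilon)/(1-\varepsilon)$, and a further splitting that places $\nabla u$ in $L^{p-\delta}$ and $\tau$ in a high $L^s$, yielding $\|\tau|\nabla u|^{p-1}\|_{(p-\varepsilon)/(p-1)}\le\|\tau\|_s\|\nabla u\|_{p-\delta}^{p-1}$. Since $\tau\in L^{\theta_1,\infty)}(\Omega)$ gives $\|\tau\|_s\le Cs^{\theta_1}$ and $\nabla u\in L^{\theta_2,p)}(\Omega)$ gives $\|\nabla u\|_{p-\delta}\le C\delta^{-\theta_2/p}$, the choice $\delta=\varepsilon/2$ (so that $s\sim C/\varepsilon$) collapses the estimate to
$$\int_B\gamma(x)\,|\nabla w|^{p-\varepsilon}\,dx\le C\,\varepsilon^{\,1-\theta_1-\theta_2}.$$
This is precisely where the hypothesis $\theta_1+\theta_2<1$ is used, making the exponent positive. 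Letting $\varepsilon\to0^+$ and applying Fatou's lemma (using $|\nabla w|^{p-\varepsilon}\to|\nabla w|^p$ pointwise) yields $\int_B\gamma\,|\nabla w|^p\,dx=0$; since $\gamma>0$ a.e., $\nabla w=0$ a.e., which completes the argument.

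I expect the main obstacle to be the Hodge decomposition step, since constructing an admissible test function out of a merely $W_0^{1,p-\varepsilon}$ function while controlling the error field $H$ by $O(\varepsilon)$ is exactly the mechanism that makes very weak solutions tractable. The remaining estimate is careful but routine exponent bookkeeping, whose only subtle feature is that the two small-parameter blow-ups, $\varepsilon^{-\theta_1}$ from $\tau$ and $\varepsilon^{-\theta_2}$ from $\nabla u$, must be overcome by the single gain $\varepsilon$ coming from $\|H\|$, which is guaranteed exactly by $\theta_1+\theta_2<1$.
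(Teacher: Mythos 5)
Your proposal follows essentially the same route as the paper's proof: truncate $u$ at the levels $m$ and $M$, apply the Iwaniec--Sbordone Hodge decomposition to a power perturbation of $\nabla\psi$ to manufacture an admissible test function with an $O(\varepsilon)$ error field, and then beat the $\varepsilon^{-\theta_1}$ and $\varepsilon^{-\theta_2}$ blow-ups coming from the grand-space norms of $\tau$ and $\nabla u$ by that single factor of $\varepsilon$, which is exactly where $\theta_1+\theta_2<1$ enters. The only cosmetic differences are that the paper keeps the two truncations together in the single function $\psi=(u-M)^+-(m-u)^+$ rather than arguing one-sidedly by symmetry, and uses the exponent $-p\varepsilon$ in the Hodge decomposition with the dual exponent $\tfrac{p-\varepsilon}{(p-1)\varepsilon}$ for $\tau$ in place of your auxiliary parameter $\delta$.
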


\begin{proof}
For any ball $B\subset \Omega$ and $0<\varepsilon <1$, let
$$
\psi =(u-M)^+ -(m-u)^+ \in W_0^{1,p-\varepsilon} (B).
$$
It is obvious that
$$
\nabla \psi =\left\{
\begin{array}{llll}
0, & \mbox { for } m\le u(x) \le M,\\
\nabla u, & \mbox { otherwise, say, on a set } E\subset B.
\end{array}
\right.
$$
Consider the Hodge decomposition (see [6]),
$$
|\nabla \psi|^{-p\varepsilon} \nabla \psi =\nabla \varphi +h.
$$
The following estimate holds
$$
\|h\|_{\frac {p-\varepsilon}{1-p\varepsilon}} \le C \varepsilon
\|\nabla \psi\|_{p-\varepsilon}^{1-p\varepsilon}.  \eqno(4.4)
$$
Definition 4.1 with $\varphi $ acting as a test function yields
$$
\int_E \langle {\cal A}(x,\nabla u), |\nabla u|^{-p\varepsilon}
\nabla u \rangle dx =\int_E \langle {\cal A}(x,\nabla u), h \rangle
 dx. \eqno(4.5)
$$
H\"older's inequality together with the conditions (3), (4), (4.4)
and (4.5) yields
$$
\begin{array}{llll}
&\displaystyle \int_E \gamma (x) |\nabla u|^{p(1-\varepsilon)} dx\\
\le &\displaystyle \int_E \langle {\cal A}(x,\nabla u), |\nabla
u|^{-p\varepsilon}
\nabla u \rangle dx \\
=&\displaystyle \int_E \langle {\cal A}(x,\nabla u),h\rangle dx \\
\le &\displaystyle \int_E \tau (x) |\nabla u|^{p-1} |h|dx \\
\le &\displaystyle \|\tau\|_{\frac
{p-\varepsilon}{(p-1)\varepsilon}} \|\nabla
u\|_{p-\varepsilon}^{p-1} \|h\|_{\frac
{p-\varepsilon}{1-p\varepsilon}} \\
\le &\displaystyle C\varepsilon \|\tau\|_{\frac
{p-\varepsilon}{(p-1)\varepsilon}} \|\nabla
u\|_{p-\varepsilon}^{p(1-\varepsilon)}\\
=&\displaystyle C |E|\varepsilon \cdot \varepsilon ^{-\theta_2
(1-\varepsilon )} \left[\frac {p-\varepsilon }{(p-1)\varepsilon
}\right]^{\theta_1} \left[\frac {(p-1)\varepsilon }{p-\varepsilon
}\right]^{\theta_1} \left(-\hspace {-4mm} \int_E |\tau|^{\frac
{p-\varepsilon }{(p-1)\varepsilon }} dx \right)^{\frac
{(p-1)\varepsilon }{p-\varepsilon }}\times\\
&\displaystyle \times \varepsilon ^{\theta_2 (1-\varepsilon )}
\left(-\hspace {-4mm} \int_E |\nabla u|^{p-\varepsilon } \right)
^{\frac {p(1-\varepsilon )}{p-\varepsilon }}.
\end{array} \eqno(4.6)
$$
The condition $\tau \in L^{\theta_1, \infty) } (\Omega)$ implies
$$
\lim_{\varepsilon \rightarrow 0^+} \left[\frac {(p-1)\varepsilon
}{p-\varepsilon }\right]^{\theta_1} \left(-\hspace {-4mm} \int_E
|\tau|^{\frac {p-\varepsilon }{(p-1)\varepsilon }} dx \right)^{\frac
{(p-1)\varepsilon }{p-\varepsilon }}\le \|\tau \|_{\theta_1,
\infty)}<\infty.  \eqno(4.7)
$$
Since $u\in W^{\theta_2,p)}(\Omega)$, then
$$
\lim_{\varepsilon \rightarrow 0^+}\varepsilon ^{\theta_2
(1-\varepsilon )} \left(-\hspace {-4mm} \int_E |\nabla
u|^{p-\varepsilon } \right) ^{\frac {p(1-\varepsilon
)}{p-\varepsilon }}\le \|\nabla u\|_{\theta_2, p)}^p <\infty,
\eqno(4.8)
$$
By $\theta_1 +\theta_2<1$, we have
$$
\lim_{\varepsilon \rightarrow 0^+} \varepsilon \cdot \varepsilon
^{-\theta_2 (1-\varepsilon )} \left[\frac {p-\varepsilon
}{(p-1)\varepsilon }\right]^{\theta_1}=\left(\frac p
{p-1}\right)^{\theta_1} \lim_{\varepsilon \rightarrow 0^+}
\varepsilon ^{1-\theta_2 (1-\varepsilon )-\theta_1} =0.  \eqno(4.9)
$$
Combining (4.6)-(4.9), and taking into account the assumption
$\gamma (x)>0$, a.e. $\Omega$, we arrive at $\nabla u=0$, a.e. $E$.
This implies that $(u-M)^+ -(m-u)^+$ vanishes a.e. in $B$, and thus
$(u-M)^+ -(m-u)^+$ must be the zero function in $B$, completing the
proof of Theorem 4.1.
\end{proof}

\begin{rem}
We remark that the result in Theorem 4.1 is a generalization of a
result  due to Moscariello, see [17, Corollary 4.1].
\end{rem}

\section{A Weighted Version}
A weight is a locally integrable function on $\mbox {R}^n$ which
takes values in $(0,\infty)$ almost everywhere. For a weight $w$ and
a measurable set $E$, we define $w(E)=\int_E w(x)dx$ and the
Lebesgue measure of $E$ by $|E|$. The weighted Lebesgue spaces with
respect to the measure $w(x)dx$ are denoted by $L^p_w$ with
$0<p<\infty$. Given a weight $w$, we say that $w$ satisfies the
doubling condition if there exists a constant $C>0$ such that for
any cube $Q$, we have $w(2Q)\le Cw(Q)$, where $2Q$ denotes the cube
with the same center as $Q$ whose side length is 2 times that of
$Q$. When $w$ satisfies this condition, we denote $w\in \Delta_2$,
for short.

A weight function $w$ is in the Muckenhoupt class $A_p$ with
$1<p<\infty$ if there exists $C>1$ such that for any cube $Q$
$$
\left(-\hspace {-4mm} \int_Q w (x)dx \right)\left(-\hspace {-4mm}
\int_Q w (x)^{1-p'}dx \right)^{p-1} \le C, \eqno(5.1)
$$
where $\frac 1 p +\frac 1 {p'}=1$. We define $A_\infty =\bigcup
_{1<p<\infty}A_p$.

Let $w$ be a weight. The Hardy-Littlewood maximal operator with
respect to the measure $w(x)dx$ is defined by
$$
M_wf(x) =\sup_{Q\ni x} \frac 1 {w(Q)} \int_Q |f(x)|w(x)dx.
$$
We say that $T$ is a Calder\'on-Zygmund operator if there exists a
function $K$ which satisfies the following conditions:
$$
Tf(x) =\mbox {p.v.} \int_{\mbox {R}^n} K(x-y) f(y) dy.
$$
$$
|K(x)| \le \frac {C_K}{|x|^n} \mbox { and } |\nabla K(x)|\le \frac
{C_K}{ |x|^{n+1}}, \  \ x\ne 0.
$$

For $w$ a weight and $0\le \theta <\infty$, we define the space
$L_w^{\theta, \infty)} (\Omega)$ as follows
$$
L^{\theta, \infty)}_w (\Omega)=\left\{f(x) \in \bigcap _{1<p<\infty}
L^p_w (\Omega): \|f\|_{L^{\theta, \infty)}_w (\Omega)}
<\infty\right\},
$$
where
$$
\|f\|_{L^{\theta, \infty)}_w(\Omega)} =\sup_{1<p<\infty} \frac 1
{p^\theta} \left( \frac 1 {w(\Omega)} \int_\Omega |f(x)|^p w(x) dx
\right)^{\frac 1 p}.
$$
The following lemma comes from [18].

\begin{lemma}
If $1<p<\infty$ and $w\in \Delta _2$, then the operator $M_w$ is
bounded on $L^p_w(\Omega)$.
\end{lemma}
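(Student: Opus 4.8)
The plan is to recognize $M_w$ as the Hardy--Littlewood maximal operator associated with the measure $d\mu=w(x)\,dx$, and to exploit the fact that the hypothesis $w\in\Delta_2$ makes $\mu$ a doubling measure. Once this is in place, boundedness on $L^p_w(\Omega)$ for $1<p<\infty$ follows from the classical two-step scheme: first establish a weak-type $(1,1)$ estimate with respect to $\mu$, then interpolate against the trivial $L^\infty$ bound via the Marcinkiewicz interpolation theorem. The only place where the weight hypothesis is genuinely used is the weak-type estimate, so that is where I would concentrate the work.

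First I would prove the weak-type $(1,1)$ inequality
$$
w\left(\{x\in\Omega: M_wf(x)>\lambda\}\right)\le \frac{C}{\lambda}\int_\Omega |f(x)|\,w(x)\,dx,\qquad \lambda>0.
$$
Fix $\lambda>0$ and set $E_\lambda=\{M_wf>\lambda\}$. By the very definition of $M_w$, to each $x\in E_\lambda$ there corresponds a cube $Q_x\ni x$ with $\frac{1}{w(Q_x)}\int_{Q_x}|f|\,w\,dx>\lambda$, equivalently $w(Q_x)<\frac{1}{\lambda}\int_{Q_x}|f|\,w\,dx$. From the resulting covering of $E_\lambda$ I would extract, by a Vitali-type covering lemma in $\mbox{R}^n$, a countable pairwise disjoint subfamily $\{Q_j\}$ whose fixed dilates $\{5Q_j\}$ still cover $E_\lambda$. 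Iterating the doubling condition $w(2Q)\le Cw(Q)$ a bounded number of times gives $w(5Q_j)\le C'w(Q_j)$, and hence
$$
w(E_\lambda)\le \sum_j w(5Q_j)\le C'\sum_j w(Q_j)\le \frac{C'}{\lambda}\sum_j\int_{Q_j}|f|\,w\,dx\le \frac{C'}{\lambda}\int_\Omega |f|\,w\,dx,
$$
the last step using the disjointness of the $Q_j$. This is the desired weak-type bound.

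Next I would record the trivial endpoint estimate $\|M_wf\|_{L^\infty_w}\le\|f\|_{L^\infty_w}$, which holds because every weighted average of $|f|$ is at most its essential supremum, so that $M_w$ is of strong type $(\infty,\infty)$. Applying the Marcinkiewicz interpolation theorem to the sublinear operator $M_w$ on the measure space $(\Omega,w\,dx)$, between the weak-type $(1,1)$ endpoint and the strong-type $(\infty,\infty)$ endpoint, yields $\|M_wf\|_{L^p_w}\le C_p\|f\|_{L^p_w}$ for every $1<p<\infty$, which is exactly the assertion of the lemma. I expect the main obstacle to be the covering step: one must invoke a Vitali (or Besicovitch) covering lemma and then convert the measure of the enlarged selected cubes back to the measure of their disjoint cores, and it is precisely this conversion that forces the use of the doubling hypothesis $w\in\Delta_2$. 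Without $\Delta_2$ the passage from $w(5Q_j)$ to $w(Q_j)$ breaks down and the argument fails.
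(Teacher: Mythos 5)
The paper does not prove this lemma at all: it simply states it and cites reference [18] (Garc\'ia-Cuerva and Rubio de Francia), so there is no in-paper argument to compare against. Your proof is the standard one --- and essentially the one found in that reference --- namely the weak-type $(1,1)$ estimate for the measure $d\mu = w\,dx$ obtained from a Vitali-type covering argument combined with the doubling hypothesis, followed by Marcinkiewicz interpolation against the trivial $(\infty,\infty)$ endpoint. The logic is sound and you correctly identify the single place where $w\in\Delta_2$ is used, namely the passage from $w(5Q_j)$ back to $w(Q_j)$.

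One technical point deserves care if you were to write this out in full: the basic $5r$-covering lemma extracts a disjoint subfamily with dilates covering the set only when the cubes in the original family have uniformly bounded diameters, and the cubes $Q_x$ produced by the definition of $M_w$ need not satisfy this a priori (the bound $w(Q_x)<\lambda^{-1}\|f\|_{L^1_w}$ controls the $w$-measure of $Q_x$, not its side length, unless $w$ is bounded below). The standard repair is to first prove the estimate for the truncated maximal operator taken over cubes of side length at most $R$, or to cover a compact subset of $E_\lambda$ by finitely many cubes and run the finite Vitali selection, and then let $R\to\infty$ (respectively exhaust $E_\lambda$ by compacts) using monotone convergence. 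With that adjustment your argument is complete and correct.
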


\begin{theorem}
The operator $M_w$ is bounded on $L^{\theta, \infty)}_w (\Omega)$
for $0\le \theta <\infty$ and $w\in \Delta _2$.
\end{theorem}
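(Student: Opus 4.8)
The plan is to combine two elementary facts about the weighted averages with a quantitative, uniform-in-$p$ version of Lemma 5.1. Write $N_p(g)=\left(\frac{1}{w(\Omega)}\int_\Omega |g|^p w\,dx\right)^{1/p}$, so that $\|g\|_{L^{\theta,\infty)}_w}=\sup_{1<p<\infty}p^{-\theta}N_p(g)$. Since $\frac{w(x)\,dx}{w(\Omega)}$ is a probability measure, Jensen's inequality shows that $p\mapsto N_p(g)$ is non-decreasing; moreover $p^{-\theta}\le 1$ for $p\ge 1$ and $\theta\ge 0$. For each fixed $p$, Lemma 5.1 gives $N_p(M_w f)\le C_p N_p(f)$, so that $M_w f\in\bigcap_{1<p<\infty}L^p_w(\Omega)$ whenever $f$ is; the whole difficulty is to control the supremum over $p$ of $p^{-\theta}N_p(M_w f)$.

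First I would upgrade Lemma 5.1 to a bound that is uniform for $p$ away from $1$. The standard route is Marcinkiewicz interpolation between the weak-type $(1,1)$ inequality $w(\{M_w f>\lambda\})\le A\lambda^{-1}\int_\Omega|f|w\,dx$, whose constant $A$ depends only on the doubling constant of $w$, and the contraction $\|M_w f\|_\infty\le\|f\|_\infty$. This yields $N_p(M_w f)\le C\left(\frac{p}{p-1}\right)^{1/p}A^{1/p}N_p(f)$, and since both $\left(\frac{p}{p-1}\right)^{1/p}$ and $A^{1/p}$ are bounded for $p\ge 2$, there is a constant $C_0$, depending only on the doubling constant, with $N_p(M_w f)\le C_0 N_p(f)$ for all $p\ge 2$. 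I expect this uniformity, rather than the rest of the argument, to be the main obstacle, because Lemma 5.1 as quoted only asserts boundedness for each individual $p$; if one does not wish to recompute the interpolation constant, the same conclusion on $[2,\infty)$ follows abstractly from Marcinkiewicz interpolation between the $L^\infty$ contraction and Lemma 5.1 at the single exponent $p=2$.

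Finally I would split the defining supremum at $p=2$. For $p\ge 2$ the uniform bound gives $p^{-\theta}N_p(M_w f)\le C_0\,p^{-\theta}N_p(f)\le C_0\|f\|_{L^{\theta,\infty)}_w}$. For $1<p\le 2$, using $p^{-\theta}\le 1$, the monotonicity of $N_p$, and then the case $p=2$ of the uniform bound, I get $p^{-\theta}N_p(M_w f)\le N_2(M_w f)\le C_0 N_2(f)=C_0\,2^\theta\bigl(2^{-\theta}N_2(f)\bigr)\le C_0\,2^\theta\|f\|_{L^{\theta,\infty)}_w}$. Taking the supremum over all $p>1$ and combining the two ranges yields $\|M_w f\|_{L^{\theta,\infty)}_w}\le C_0\,2^\theta\|f\|_{L^{\theta,\infty)}_w}$, which proves the boundedness of $M_w$ on $L^{\theta,\infty)}_w(\Omega)$.
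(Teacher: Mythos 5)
Your proof is correct, and it is in fact more careful than the paper's. The paper's own argument is essentially the one-line version of your last paragraph without the splitting: it quotes Lemma 5.1, writes $\left(\int_\Omega |M_wf|^p w\,dx\right)^{1/p}\le C\left(\int_\Omega |f|^p w\,dx\right)^{1/p}$ with a single letter $C$, and takes the supremum of $p^{-\theta}N_p$ over all $1<p<\infty$ --- tacitly treating the operator norm of $M_w$ on $L^p_w$ as independent of $p$. You correctly identify that this uniformity is the real content of the theorem, since Lemma 5.1 as stated only gives, for each fixed $p$, a constant $C_p$ which in general blows up as $p\to 1^+$. Your repair --- Marcinkiewicz interpolation between the weak $(1,1)$ bound for the doubling measure $w\,dx$ (or, more economically, between the $L^\infty$ contraction and Lemma 5.1 at the single exponent $p=2$) to get a constant uniform on $[2,\infty)$, combined with the monotonicity of $p\mapsto N_p$ for the probability measure $w\,dx/w(\Omega)$ and the bound $p^{-\theta}\le 1$ to absorb the range $1<p\le 2$ --- is exactly what is needed, and the device of splitting the supremum at a fixed exponent is the same one the paper itself uses later in the proof of Theorem 5.2 for the Calder\'on-Zygmund operator. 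In short: same skeleton as the paper, but you supply the uniform-in-$p$ constant that the paper's proof glosses over, and your final bound $\|M_wf\|_{L^{\theta,\infty)}_w}\le C_0 2^\theta\|f\|_{L^{\theta,\infty)}_w}$ is a legitimate conclusion where the paper's claimed constant $C$ is not justified as written.
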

\begin{proof}
By Lemma 5.1, since for $1<p<\infty$ and $w\in \Delta_2$, the
operator $M_w$ is bounded on $L^p_w (\Omega)$, then
$$
\left( \int_\Omega |M_w f(x)|^p w(x)dx \right)^{\frac 1 p} \le C
\left( \int_\Omega |f(x)|^p w(x)dx \right)^{\frac 1 p}.
$$
This implies
$$
\begin{array}{llll}
\displaystyle \|M_w f\|_{L^{\theta,\infty)}_w (\Omega)}
&\displaystyle =\sup _{1<p<\infty} \frac 1 {p^\theta} \left(\frac 1
{w(\Omega)} \int_\Omega |M_w f(x)|^p
w(x)dx \right)^{\frac 1 p } \\
&\displaystyle \le C\sup _{1<p<\infty} \frac 1 {p^\theta}
\left(\frac 1 {w(\Omega)} \int_\Omega |f(x)|^p w(x)dx \right)^{\frac
1 p } =\|f\|_{L^{\theta,\infty)}_w (\Omega)},
\end{array}
$$
completing the proof of Theorem 5.1
\end{proof}

The following lemma can be found in [19].

\begin{lemma}
If $w\in A_\infty$, then there exists $q\in (1,\infty)$ such that
$w\in A_q$.
\end{lemma}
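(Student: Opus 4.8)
The plan begins by reading the statement against the definition of $A_\infty$ recorded just above, namely $A_\infty=\bigcup_{1<p<\infty}A_p$. With this definition in force the assertion is essentially a matter of unwinding a union: to say $w\in A_\infty$ is to say that $w$ lies in $A_p$ for at least one exponent $p\in(1,\infty)$, and one then simply takes $q=p$. So at the purely logical level, granting the paper's own definition, there is nothing to prove beyond this observation.

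To supply the genuine content behind the lemma---the content carried by the intrinsic formulation of $A_\infty$ used in reference [19]---I would first record the monotonicity of the Muckenhoupt scale, $A_p\subseteq A_q$ whenever $1<p\le q<\infty$. This follows directly from the $A_p$ quantity in (5.1): writing $h=w^{-1}$ and noting that $q\ge p$ forces $\tfrac{1}{q-1}\le\tfrac{1}{p-1}$, the monotonicity of the power means $r\mapsto\bigl(\tfrac{1}{|Q|}\int_Q h^{r}\bigr)^{1/r}$ on the probability space $(Q,dx/|Q|)$ gives $\bigl(\tfrac{1}{|Q|}\int_Q w^{1-q'}\bigr)^{q-1}\le\bigl(\tfrac{1}{|Q|}\int_Q w^{1-p'}\bigr)^{p-1}$, so the $A_q$ constant is dominated by the $A_p$ constant. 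Hence the family $\{A_p\}_{1<p<\infty}$ is increasing and $A_\infty$ is a directed union; this is what makes ``belongs to some $A_q$'' the correct reading of membership in $A_\infty$.

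The substantive step, and the one I expect to be the main obstacle, is the converse that underlies any scale-invariant definition of $A_\infty$: showing that a weight obeying the dimensionless condition $w(E)/w(Q)\le C(|E|/|Q|)^{\delta}$ for all cubes $Q$ and all $E\subset Q$ must already lie in some $A_q$ with $q<\infty$. The route is the self-improving reverse H\"older inequality: one shows that such a $w$ satisfies $\bigl(\tfrac{1}{|Q|}\int_Q w^{1+\eta}\bigr)^{1/(1+\eta)}\le C\,\tfrac{1}{|Q|}\int_Q w$ for some $\eta>0$, typically through a Calder\'on--Zygmund stopping-time decomposition of the super-level sets of $w$ inside a fixed cube, and then transfers this estimate (applied also to the conjugate weight $w^{-1/(q-1)}$) into the $A_q$ condition by choosing $q$ slightly larger than the threshold dictated by $\eta$. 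Producing the exponent $\eta$ and tracking the self-improvement constants is the delicate part; once $\eta$ is in hand, the passage to (5.1) for the corresponding $q$ is a direct manipulation of averages. Since the paper cites [19] for the lemma, I would ultimately appeal to this classical Coifman--Fefferman argument rather than reproduce it.
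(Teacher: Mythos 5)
Your proposal is correct, and it matches what the paper does: the paper offers no proof at all, simply citing [19], and since the paper's own definition in Section 5 is $A_\infty=\bigcup_{1<p<\infty}A_p$, your observation that the lemma is then a tautology (take $q=p$ for any $p$ witnessing membership in the union) is exactly right. Your additional remarks --- the monotonicity $A_p\subseteq A_q$ for $p\le q$ via H\"older on the dual weight, and the Coifman--Fefferman reverse H\"older argument that carries the real content when $A_\infty$ is defined intrinsically --- correctly identify where the substance of the cited result lies, though none of it is needed under the definition the paper actually adopts.
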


The following lemma can be found in [20, 21].

\begin{lemma}
If $1<p<\infty$ and $w\in A_p$, then a Calder\'on-Zygmund operator
$T$ is bounded on $L^p_w (\Omega)$.
\end{lemma}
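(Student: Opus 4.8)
The plan is to establish the weighted bound by the classical Coifman--Fefferman good-$\lambda$ method, which reduces the $L^p_w$ boundedness of $T$ to that of the unweighted Hardy--Littlewood maximal operator $M$. First I would record the two purely unweighted facts that follow from the kernel hypotheses on $K$ by standard Calder\'on--Zygmund theory: $T$ extends to a bounded operator on $L^2(\mbox{R}^n)$ and is of weak type $(1,1)$, and hence is bounded on $L^q(\mbox{R}^n)$ for every $1<q<\infty$. No weight enters at this stage; these estimates serve only as input to the localized bounds below.

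The core of the argument is a good-$\lambda$ inequality comparing the weighted distribution functions of $Tf$ and $Mf$:
$$
w\left(\{x:|Tf(x)|>2\lambda,\ Mf(x)\le \gamma\lambda\}\right)\le C\,\gamma^{\delta}\,w\left(\{x:|Tf(x)|>\lambda\}\right),
$$
valid for all small $\gamma>0$ with some $\delta>0$ depending only on $w$. To prove it I would decompose the open set $\{Mf>\lambda\}$ into a Whitney family of disjoint cubes, localize $T$ on each cube $Q$, and use the unweighted weak-type $(1,1)$ estimate to bound the \emph{Lebesgue} measure of the exceptional subset of $Q$ by $C\gamma|Q|$. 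The weight hypothesis is used precisely to convert this Lebesgue bound into a $w$-bound: since $w\in A_p\subset A_\infty$ (the inclusion being immediate from $A_\infty=\bigcup_{1<q<\infty}A_q$), the reverse H\"older inequality for $A_\infty$ weights yields the comparability
$$
\frac{w(E)}{w(Q)}\le C\left(\frac{|E|}{|Q|}\right)^{\delta},\qquad E\subset Q.
$$
Summing over the Whitney cubes then gives the good-$\lambda$ inequality.

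From here I would multiply by $\lambda^{p-1}$ and integrate in $\lambda$ over $(0,\infty)$; choosing $\gamma$ small enough that $C\gamma^{\delta}<\tfrac12$ and absorbing the corresponding term into the left-hand side produces the Coifman--Fefferman inequality
$$
\left(\int_\Omega |Tf|^p\,w\,dx\right)^{1/p}\le C\left(\int_\Omega (Mf)^p\,w\,dx\right)^{1/p}.
$$
Finally, Muckenhoupt's theorem states that for $w\in A_p$ the operator $M$ is bounded on $L^p_w$, so the right-hand side is dominated by $C\|f\|_{L^p_w}$, which completes the proof.

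I expect the principal obstacle to be the measure-conversion step inside the good-$\lambda$ inequality: one must check that the $A_p$ hypothesis genuinely delivers the reverse-H\"older comparability above, and that the Whitney localization is compatible with the support and smoothness of $K$ so that the weak-$(1,1)$ bound applies locally. A cleaner modern alternative would be to dominate $|Tf|$ pointwise by a finite sum of sparse operators of the form $\sum_{Q\in\mathcal{S}}\langle|f|\rangle_Q\mathbf{1}_Q$ and to prove the $L^p_w$ bound for these sparse operators directly from the testing condition (5.1); this route bypasses the reverse-H\"older technicalities but requires first establishing the sparse domination theorem.
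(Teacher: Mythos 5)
The paper offers no proof of this lemma: it is quoted as known from the references [20, 21]. Your sketch is precisely the Coifman--Fefferman good-$\lambda$ argument that those references contain, so in spirit you are reconstructing the intended (standard) proof: compare the weighted distribution functions of $Tf$ and $Mf$, convert Lebesgue-measure smallness on each cube into $w$-measure smallness via the $A_\infty$ comparability $w(E)/w(Q)\le C\left(|E|/|Q|\right)^{\delta}$ (which does follow from $w\in A_p\subset A_\infty$ by the reverse H\"older inequality), integrate in $\lambda$, absorb, and finish with Muckenhoupt's theorem that $M$ is bounded on $L^p_w$ for $w\in A_p$. That skeleton is correct and is what a complete proof would look like.

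Two steps need repair before the sketch closes. First, the unweighted $L^2$ and weak $(1,1)$ bounds do \emph{not} ``follow from the kernel hypotheses'' as the paper states them in Section 5: the size bound $|K(x)|\le C_K|x|^{-n}$ and the gradient bound carry no cancellation, and $K(x)=|x|^{-n}$ satisfies both while the associated principal value operator is not bounded on $L^2$. One must either build $L^2$-boundedness into the definition of a Calder\'on--Zygmund operator (as [20, 21] do) or impose a cancellation condition such as $\hat K\in L^\infty$; this is really a defect of the paper's definition, but your first paragraph silently assumes it is resolved, and everything downstream (weak $(1,1)$, the localized estimates) depends on it. Second, the Whitney decomposition in the good-$\lambda$ step must be applied to an open set containing $\{|Tf|>\lambda\}$ --- in practice to $\{T^{*}f>\lambda\}$ for the maximal truncation $T^{*}$, which is open by lower semicontinuity --- not to $\{Mf>\lambda\}$: the right-hand side of the inequality is $w(\{|Tf|>\lambda\})$, so the cubes over which you sum have to exhaust exactly that set, and the point in the dilated cube where $|T^{*}f|\le\lambda$ is what controls the contribution of the far part $f_2$. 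With those corrections the argument is the standard one and the final appeal to $M:L^p_w\to L^p_w$ is legitimate.
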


\begin{theorem}
A Calder\'on-Zygmund operator $T$ is bounded on $L^{\theta,
\infty)}_w (\Omega)$ for $0\le \theta <\infty$ and $w\in A_\infty$.
\end{theorem}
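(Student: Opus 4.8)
The plan is to mirror the argument used for the maximal operator in Theorem 5.1, feeding in the $L^p_w$ boundedness of $T$ from Lemma 5.3, but first I must cope with the fact that $w\in A_\infty$ guarantees membership in $A_p$ only for large $p$, whereas the norm on $L^{\theta,\infty)}_w(\Omega)$ is a supremum over all $1<p<\infty$. First I would apply Lemma 5.2 to fix an exponent $q\in(1,\infty)$ with $w\in A_q$. Since the Muckenhoupt classes increase with $p$, this gives $w\in A_p$ for every $p\ge q$, so by Lemma 5.3 the operator $T$ is bounded on $L^p_w(\Omega)$ for all $p\ge q$.

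Next I would split the defining supremum at the threshold $q$ and treat the two ranges separately. On the high range $q\le p<\infty$ the situation is exactly that of Theorem 5.1: for each such $p$, Lemma 5.3 gives $\left(\int_\Omega|Tf|^pw\,dx\right)^{1/p}\le C\left(\int_\Omega|f|^pw\,dx\right)^{1/p}$, whence, after dividing by $w(\Omega)^{1/p}$ and inserting $p^{-\theta}$, one gets $\frac{1}{p^\theta}\left(\frac{1}{w(\Omega)}\int_\Omega|Tf|^pw\,dx\right)^{1/p}\le C\,\|f\|_{L^{\theta,\infty)}_w(\Omega)}$. On the low range $1<p\le q$ I would exploit that $d\mu=w\,dx/w(\Omega)$ is a probability measure on $\Omega$, so that $p\mapsto\left(\int_\Omega|Tf|^p\,d\mu\right)^{1/p}$ is non-decreasing by Jensen's inequality; combined with $p^{-\theta}\le 1$ for $p>1$ and $\theta\ge 0$, the low-range supremum is dominated by the single quantity $\left(\int_\Omega|Tf|^q\,d\mu\right)^{1/q}$. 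Applying the $L^q_w$ boundedness of $T$ at this one exponent $q$ and then reinserting the factor $q^{-\theta}$ bounds this by $C\,q^\theta\,\|f\|_{L^{\theta,\infty)}_w(\Omega)}$. Taking the maximum of the two range estimates yields $\|Tf\|_{L^{\theta,\infty)}_w(\Omega)}\le C\,\|f\|_{L^{\theta,\infty)}_w(\Omega)}$.

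The step I expect to be the main obstacle is the high range, where the supremum runs over infinitely many exponents: Lemma 5.3 supplies, for each $p\ge q$, a constant $C_p$ in $\|Tf\|_{L^p_w}\le C_p\|f\|_{L^p_w}$, and passing to $\sup_{p\ge q}$ requires $\sup_{p\ge q}C_p<\infty$, or at least $\sup_{p\ge q}C_p\,p^{-\theta}<\infty$. This is delicate because the $L^p$ operator norm of a Calder\'on--Zygmund operator can grow with $p$, so one cannot simply quote Lemma 5.3 with a fixed $C$. The estimate must instead rest on the stability of the weight: for fixed $w\in A_q$ the characteristic $[w]_{A_p}$ stays controlled by $[w]_{A_q}$ for all $p\ge q$, which is what one would use to keep the constant in Lemma 5.3 under control on $[q,\infty)$. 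Securing this uniform bound $C$ is the crux; once it is in hand, both ranges are dominated by a constant multiple of $\|f\|_{L^{\theta,\infty)}_w(\Omega)}$ and the proof closes.
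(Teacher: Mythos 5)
Your argument is essentially the paper's own proof: fix $q$ with $w\in A_q$ via Lemma 5.2, split the defining supremum at $q$, reduce the range $1<p<q$ to the single exponent $q$ (the paper does this by H\"older's inequality against the normalized measure $w(x)dx/w(\Omega)$, which is the same device as your Jensen monotonicity observation, and pays the same factor $q^\theta$), and invoke Lemma 5.3 on $[q,\infty)$. The one place you go beyond the paper is the obstacle you flag in the high range, and you are right to flag it: the paper's last line simply asserts $\sup_{q\le p<\infty} p^{-\theta}\left(\frac 1 {w(\Omega)}\int_\Omega |Tf|^p w\,dx\right)^{1/p}\le C\|f\|_{L^{\theta,\infty)}_w(\Omega)}$ with a single constant $C$, i.e.\ it tacitly uses Lemma 5.3 with an operator norm that is uniform in $p\ge q$. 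That uniformity is not available in general: even for $w\equiv 1$ the $L^p$ operator norm of a nontrivial Calder\'on--Zygmund operator grows like $p$ as $p\to\infty$ (the Hilbert transform of a characteristic function is unbounded, so $T$ is certainly not bounded on $L^\infty=L^{0,\infty)}$, and the statement cannot hold as written for $\theta=0$). What your strategy can actually deliver, using the stability $[w]_{A_p}\le [w]_{A_q}$ for $p\ge q$ together with the known growth $C_p\lesssim p$ of the weighted operator norm, is $\sup_{p\ge q}C_p\,p^{-\theta}<\infty$ precisely when $\theta\ge 1$. So your identification of the crux is correct, and it is a genuine gap in the paper's proof rather than a defect peculiar to your write-up; a complete argument needs either the restriction $\theta\ge 1$ or some additional input the paper does not supply.
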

\begin{proof}
By $w\in A_\infty$ and Lemma 5.2, one has $w\in A_q$ for some $q\in
(1,\infty)$. For $1<p<q<\infty$, H\"older's inequality yields
$$
\begin{array}{llll}
&\displaystyle \int_\Omega |Tf(x)|^p w(x)dx =\int_\Omega |Tf(x)|^p
w(x)^{\frac p q} w(x)^{\frac {q-p}{p}}dx \\
\le &\displaystyle \left( \int_\Omega |Tf(x)|^q w(x)dx\right)^{\frac
p q}  \left(\int_\Omega w(x)dx\right)^{\frac {q-p} q}.
\end{array}
$$
Thus
$$
\begin{array}{llll}
&\displaystyle \frac 1 {p^\theta} \left(\frac 1 {w(\Omega)}\int_
\Omega |Tf(x)|^p w(x)dx \right)^{\frac 1 p }\\
\le &\displaystyle \frac 1 {p^\theta} \left(\frac 1 {w(\Omega)}\int_
\Omega |Tf(x)|^q w(x)dx \right)^{\frac p q} \left(\frac 1
{w(\Omega)}\int_\Omega w(x)dx \right)^{\frac {q-p} q}\\
= &\displaystyle \frac 1 {p^\theta} \left(\frac 1 {w(\Omega)}\int_
\Omega |Tf(x)|^q w(x)dx \right)^{\frac p q} .
\end{array}  \eqno(5.1)
$$
Lemma 5.3 yields
$$
\begin{array}{llll}
&\displaystyle \|Tf\|_{L^{\theta, \infty)}_w (\Omega)} \\
=&\displaystyle \max \left\{ \sup_{1<p<q} \frac {1}{p^\theta}
\left(\frac 1 {w(\Omega)} \int_\Omega |Tf(x)|^p w(x) dx
\right)^{\frac 1 p }, \sup_{q\le p<\infty} \frac {1}{p^\theta}
\left(\frac 1 {w(\Omega)} \int_\Omega
|Tf(x)|^p w(x) dx \right)^{\frac 1 p } \right\}\\
=&\displaystyle  \max \left\{ \sup_{1<p<q} \frac {1}{p^\theta}
\left(\frac 1 {w(\Omega)} \int_\Omega |Tf(x)|^q w(x) dx
\right)^{\frac p q}, \sup_{q\le p<\infty} \frac {1}{p^\theta}
\left(\frac 1 {w(\Omega)} \int_\Omega
|Tf(x)|^p w(x) dx \right)^{\frac 1 p } \right\}\\
\le &\displaystyle \max \left\{ \sup_{1<p<q}\left(\frac q p
\right)^\theta, 1 \right\} \sup_{q\le p<\infty} \frac {1}{p^\theta}
\left(\frac 1 {w(\Omega)} \int_\Omega |Tf(x)|^p w(x) dx
\right)^{\frac 1 p }\\
\le &\displaystyle Cq^\theta \sup_{q\le p<\infty} \frac
{1}{p^\theta} \left(\frac 1 {w(\Omega)} \int_\Omega |Tf(x)|^p w(x)
dx\right)^{\frac 1 p }\\
\le &\displaystyle Cq^\theta \|f\|_{L^{\theta, \infty)}_w (\Omega)}.
\end{array}
$$
As desired.
\end{proof}

\vspace{4mm}

\noindent {\bf Acknowledgement } This study was funded by NSFC
(10971224) and NSF of Hebei Province (A2011201011).

\vspace{8mm}

\rm \footnotesize
\linespread{2}

\end{document}